\newtheorem{theorem}{Theorem}
\newtheorem{lemma}[theorem]{Lemma}
\newtheorem{corollary}[theorem]{Corollary}
\newtheorem{remark}[theorem]{Remark}
\title{Some series equivalent to the extended Riemann hypothesis for Dedekind zeta functions}
\author{Vincent Nguyen}
\date{}
\keywords{extended Riemann hypothesis; Riemann xi function; Dedekind zeta function.}
\subjclass{11M26, 11R42}
\begin{document}

\begin{abstract}
    The extended Riemann hypothesis (ERH) for Dedekind zeta functions remains one of the most elusive open problems in number theory. Over the last century, many equivalent statements to the classical Riemann hypothesis alone have been discovered. We prove that the closed form of some infinite series over the non-trivial zeros of Dedekind zeta functions holds if and only if ERH is true.
\end{abstract}

\maketitle

\section{Introduction}

Let $K$ be a number field with degree $n = r_1 + 2r_2$ where $r_1$ and $r_2$ are the number of real embeddings and pairs of complex embeddings respectively. The Dedekind zeta function for a number field $K$ is defined as
$$\zeta_K(s) = \sum_{\mathfrak{a} \subseteq \mathcal{O}_K} \frac{1}{N(\mathfrak{a})^s}, \quad (\Re(s) > 1).$$
Here, the sum runs over all non-zero integral ideals $\mathfrak{a}$ of the ring of integers of $K$, denoted $\mathcal{O}_K$, and $N(\cdot)$ denotes the norm of an ideal. When $K = \mathbb{Q}$, we immediately recover the standard series representation of the Riemann zeta function, $\zeta_\mathbb{Q}(s) := \zeta(s)$. Like $\zeta$, the Dedekind zeta functions also admit a meromorphic extension to the complex plane with a simple pole at $s = 1$. The Dedekind zeta function also satisfies the functional equation \cite[p. 467]{Neukirch}
\begin{equation}\label{DedekindFunctionalEquation}
    \zeta_K(1-s) = |\Delta_K|^{s -\frac{1}{2}}\left(\cos\frac{\pi s}{2}\right)^{r_1 + r_2} \left(\sin \frac{\pi s}{2}\right)^{r_2} 2^{n(1-s)} \pi^{-ns}\Gamma(s)^n \zeta_K(s)
\end{equation}
where $\Gamma$ denotes the classical gamma function and $\Delta_K$ denotes the discriminant of the number field $K$. The functional equation \eqref{DedekindFunctionalEquation} implies Riemann's functional equation for $\zeta$ in \cite{Riemann1859}.

As with the Riemann hypothesis, there is also the extended Riemann hypothesis (ERH) for Dedekind zeta functions, which conjectures that all non-trivial zeros of the Dedekind zeta function, those zeros which lie in the critical strip $0 < \Re(s) < 1$, lie on the critical line $\Re(s) = 1/2$. Many equivalent statements of the Riemann hypothesis and its generalizations have been discovered in the past century alone. The Riemann hypothesis is equivalent to
\begin{equation}\label{RHIntegral}
    \frac{1}{\pi}\int_0^\infty \log\left|\frac{\zeta\left(\frac{1}{2} + it\right)}{\zeta\left(\frac{1}{2}\right)}\right|\frac{dt}{t^2} = \frac{\pi}{8} + \frac{\gamma}{4} + \frac{1}{4}\log(8\pi) - 2
\end{equation}
where $\gamma = 0.577\ldots$ is the Euler-Mascheroni constant. The result can be found in \cite{Hu}. A generalization of \eqref{RHIntegral} for ERH can be found in \cite{Hu2012544}. Combining \cite[Proposition 1.5.1]{Droll} with \cite[Theorem 6.5]{Dixit} implies the equivalence between the truth of ERH for $\zeta_K$ and the following equality holding
$$\sum_{\rho_K} \frac{1}{|\rho_K|^2} = \gamma_K + \frac{1}{2}\log|\Delta_K| - \frac{\gamma n}{2} - (r_1 + r_2)\log 2 - \frac{n}{2}\log \pi + 1$$
where $\gamma_K$ is the Euler-Kronecker constant of $K$, which is a generalization of $\gamma$ to arbitrary number fields. Here, the sum is understood to run over all the non-trivial zeros $\rho_K$, counting multiplicity. The equivalence for the Riemann hypothesis is noted in \cite{GunMurtyRath}. Several other equivalences for the Riemann hypothesis and its generalizations can be found in \cite{Broughan1, Broughan2}.

In 2022, Suman and Das proved in \cite{Suman} that the Riemann hypothesis is equivalent to
$$\sum_\rho \frac{1}{\left|\frac{1}{2} - \rho\right|^2} = \frac{\xi''\left(\frac{1}{2}\right)}{\xi\left(\frac{1}{2}\right)}$$
where the sum runs over the non-trivial zeros, counting multiplicity, of the Riemann zeta function and
\begin{equation}\label{RiemannXi}
    \xi(s) = \frac{1}{2}s(s-1)\pi^{-s/2}\Gamma\left(\frac{s}{2}\right)\zeta(s)
\end{equation}
is the Riemann xi function. In this spirit, we generalize Suman and Das' result to the following theorem.
\begin{theorem}\label{MainResult}
    Let $K$ be a number field. Then ERH for $\zeta_K$ is equivalent to
    \begin{equation} \label{MainResultEq}
        \sum_{\underset{\rho_K \neq \frac{1}{2}}{\rho_K}} \frac{1}{\left|\frac{1}{2} - \rho_K\right|^2} = \frac{\mathscr{X}''_K\left(\frac{1}{2}\right)}{\mathscr{X}_K\left(\frac{1}{2}\right)}.
    \end{equation}
\end{theorem}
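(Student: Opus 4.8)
The plan is to reduce the equivalence to a single clean identity exhibiting the difference between the two sides of \eqref{MainResultEq} as a manifestly non-negative series that vanishes precisely under ERH. I will use that $\mathscr{X}_K$ is an entire function of order $1$, real-valued on the real axis, satisfying the reflection symmetry $\mathscr{X}_K(s)=\mathscr{X}_K(1-s)$ (a reformulation of \eqref{DedekindFunctionalEquation}), whose zero multiset is exactly the set $\{\rho_K\}$ of non-trivial zeros of $\zeta_K$; under the hypothesis $\zeta_K(1/2)\neq 0$ one has $\mathscr{X}_K(1/2)\neq 0$, so the right-hand side of \eqref{MainResultEq} is well defined and $1/2$ is not a zero, which makes the restriction $\rho_K\neq\tfrac12$ in the sum vacuous. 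All zero-sums below may therefore be taken over the full multiset $\{\rho_K\}$.

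First I would pass to $g(z):=\mathscr{X}_K\!\left(\tfrac12+z\right)$, which is entire of order $1$, even, and nonzero at $z=0$, with zeros $z=\rho_K-\tfrac12$. Since the zeros of $\zeta_K$ have finite density, $\sum_{\rho_K}\bigl|\rho_K-\tfrac12\bigr|^{-2}<\infty$, so the Hadamard factorization of $g$ has genus $1$ and, after pairing $z_k$ with $-z_k$ and using evenness to kill the exponential factor, collapses to $g(z)=g(0)\prod_k\bigl(1-z^2/z_k^2\bigr)$. Expanding, $g(z)/g(0)=1-z^2\sum_k z_k^{-2}+O(z^4)$, and two differentiations at $z=0$ yield
\[
\frac{\mathscr{X}_K''\!\left(\tfrac12\right)}{\mathscr{X}_K\!\left(\tfrac12\right)}=\frac{g''(0)}{g(0)}=-\sum_{\rho_K}\frac{1}{\left(\tfrac12-\rho_K\right)^2},
\]
the series converging absolutely by the same density bound. (Equivalently, one may differentiate $\mathscr{X}_K'/\mathscr{X}_K$ and use $\mathscr{X}_K'(\tfrac12)=0$, which is immediate from the functional equation.)

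The heart of the argument is then elementary. The Dirichlet coefficients of $\zeta_K$ are real, so the zero multiset is stable under conjugation and the imaginary parts of the terms cancel pairwise; writing $a_\rho=\tfrac12-\Re\rho_K$ and $b_\rho=\Im\rho_K$,
\[
\sum_{\rho_K}\frac{1}{\left(\tfrac12-\rho_K\right)^2}=\sum_{\rho_K}\Re\frac{1}{\left(\tfrac12-\rho_K\right)^2}=\sum_{\rho_K}\frac{a_\rho^2-b_\rho^2}{(a_\rho^2+b_\rho^2)^2}.
\]
Since $\bigl|\tfrac12-\rho_K\bigr|^2=a_\rho^2+b_\rho^2$, combining this with the identity of the previous paragraph gives
\[
\sum_{\substack{\rho_K\\ \rho_K\neq\frac12}}\frac{1}{\left|\tfrac12-\rho_K\right|^2}-\frac{\mathscr{X}_K''\!\left(\tfrac12\right)}{\mathscr{X}_K\!\left(\tfrac12\right)}=\sum_{\rho_K}\left(\frac{1}{a_\rho^2+b_\rho^2}+\frac{a_\rho^2-b_\rho^2}{(a_\rho^2+b_\rho^2)^2}\right)=\sum_{\rho_K}\frac{2\left(\tfrac12-\Re\rho_K\right)^2}{\left|\tfrac12-\rho_K\right|^4}.
\]
The right-hand side is a sum of non-negative terms, so it vanishes if and only if $\Re\rho_K=\tfrac12$ for every non-trivial zero, that is, if and only if ERH holds for $\zeta_K$. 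Both implications of Theorem~\ref{MainResult} follow at once.

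I expect the genuine work to lie in the first two paragraphs rather than in this final computation: one must (i) check directly from \eqref{DedekindFunctionalEquation} that the completed function $\mathscr{X}_K$ really is entire of order exactly $1$ with zero set precisely $\{\rho_K\}$ — the archimedean gamma-factors must exactly cancel the trivial zeros of $\zeta_K$, and a polynomial factor the pole at $s=1$ — so that the Hadamard product has genus $1$; and (ii) justify the term-by-term rearrangements, which rest on the classical estimate $\sum_{\rho_K}|\rho_K|^{-2}<\infty$ (equivalently $\sum_{\rho_K}|\tfrac12-\rho_K|^{-2}<\infty$: only finitely many zeros have $|\Im\rho_K|<1$, for the remaining ones $|\rho_K|\asymp|\tfrac12-\rho_K|$, and finiteness of the bounded part uses $\zeta_K(\tfrac12)\neq 0$). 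Everything beyond that is exact arithmetic.
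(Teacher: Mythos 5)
Your proposal is correct and follows essentially the same route as the paper: both pass through the Hadamard factorization of $\mathscr{X}_K$ to obtain $\mathscr{X}_K''\left(\frac{1}{2}\right)/\mathscr{X}_K\left(\frac{1}{2}\right) = -\sum_{\rho_K}\left(\frac{1}{2}-\rho_K\right)^{-2}$, then use conjugation symmetry of the zeros to reduce the difference of the two sides of \eqref{MainResultEq} to the non-negative series $\sum_{\rho_K} 2\left(\frac{1}{2}-\Re\rho_K\right)^2\left|\frac{1}{2}-\rho_K\right|^{-4}$, which vanishes exactly under ERH. The only (cosmetic) differences are that you kill the exponential factor via evenness of $\mathscr{X}_K\left(\frac{1}{2}+z\right)$ rather than using $\mathscr{X}_K'\left(\frac{1}{2}\right)=0$ after two logarithmic differentiations, and you invoke conjugation-stability of the zero multiset where the paper proves a reflection lemma for $\mathscr{X}_K^{(n)}$ on the real axis.
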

Here, $$\mathscr{X}_K(s) := \left(s - \frac{1}{2}\right)^{-m} \xi_K(s)$$
where $m$ is the multiplicity of the zero of $\zeta_K$ at $s = 1/2$ and 
\begin{equation}\label{DedekindXi}
    \xi_K(s)\ = \frac{1}{2}s(s-1)\,|\Delta_K|^{s/2} \pi^{-n s/2} 2^{(1-s) r_{2}}\Gamma\left(\frac{s}{2}\right)^{r_{1}}\Gamma(s)^{r_{2}}\zeta_K(s).
\end{equation}
Equation \eqref{DedekindXi} is a generalization of \eqref{RiemannXi} to arbitrary number fields (see \cite{Neukirch}).

\section{Preliminaries}
Before we prove Theorem \ref{MainResult}, we must examine the behavior of $\mathscr{X}_K$. One can see that the function is entire on the complex plane. One can see from \eqref{DedekindFunctionalEquation} that $\zeta_K$ has a zero of order $r_1 + r_2 - 1$ at $s = 0$, zeros of order $r_1 + r_2$ at the negative even integers, and zeros of order $r_2$ at the negative odd integers. There are poles from $\Gamma(s/2)^{r_1}$ and $\Gamma(s)^{r_2}$, since the gamma function has simple poles at the non-positive integers. The pole at $s = 0$ is canceled out by the zero at $s=0$ from $\zeta_K$ and $s$. The simple pole at $\zeta_K$ at $s=1$ is canceled out by $s-1$. We also see that the zeros of $\mathscr{X}_K$ are precisely the non-trivial zeros of $\zeta_K$ not equal to $\frac{1}{2}$. 

Note that $\mathscr{X}_K$ does not vanish at $s = 1/2$ because $\left(s - 1/2\right)^{-m}$ cancels out the zero of $\zeta_K$ at $s=1/2$. Armitage in \cite{Armitage} constructs such number fields $K$ where $\zeta_K(1/2) = 0$. Other discussion about the vanishing of $\zeta_K$ at $s = 1/2$ can be found in \cite{Kandhil}.

Towards proving Theorem \ref{MainResult}, we prove the following lemma.
\begin{lemma}\label{OmegaSymmetry}
    $\mathscr{X}_K(s) = \mathscr{X}_K(1-s)$ for every $s \in \mathbb{C}$. 
\end{lemma}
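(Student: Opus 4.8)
The plan is to establish the functional equation $\mathscr{X}_K(s) = \mathscr{X}_K(1-s)$ by first deriving the corresponding symmetry for the completed zeta function $\xi_K$ defined in \eqref{DedekindXi}, and then transferring it to $\mathscr{X}_K$ via the relation $\mathscr{X}_K(s) = (s-\tfrac12)^{-m}\xi_K(s)$. First I would compute $\xi_K(1-s)$ directly from its definition \eqref{DedekindXi}: substituting $1-s$ for $s$, the prefactor $\tfrac12 s(s-1)$ is manifestly invariant, so the whole matter reduces to showing that
\[
    |\Delta_K|^{(1-s)/2}\pi^{-n(1-s)/2}2^{sr_2}\Gamma\!\left(\tfrac{1-s}{2}\right)^{r_1}\Gamma(1-s)^{r_2}\zeta_K(1-s)
    = |\Delta_K|^{s/2}\pi^{-ns/2}2^{(1-s)r_2}\Gamma\!\left(\tfrac{s}{2}\right)^{r_1}\Gamma(s)^{r_2}\zeta_K(s).
\]
The plan is to feed the functional equation \eqref{DedekindFunctionalEquation} into the left-hand side to eliminate $\zeta_K(1-s)$, and then show the accumulated trigonometric and gamma factors collapse to the right-hand side.

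The key technical input is the reflection and duplication formulae for $\Gamma$. After substituting \eqref{DedekindFunctionalEquation}, I expect the $|\Delta_K|$ powers to combine to the correct exponent $s/2$ without difficulty (the exponents $\tfrac{1-s}{2}$ from the substitution and $s-\tfrac12$ from the functional equation add to $s/2$). The genuine work is the archimedean factor: I would need to verify that
\[
    \left(\cos\tfrac{\pi s}{2}\right)^{r_1+r_2}\left(\sin\tfrac{\pi s}{2}\right)^{r_2} 2^{n(1-s)}\pi^{-ns}\Gamma(s)^n
    \cdot \Gamma\!\left(\tfrac{1-s}{2}\right)^{r_1}\Gamma(1-s)^{r_2}\,\pi^{-n(1-s)/2}\,2^{sr_2}\,\pi^{n(1-s)/2}
\]
rearranges (up to the leftover $|\Delta_K|$ and $\pi$ bookkeeping already handled) into $\pi^{-ns/2}2^{(1-s)r_2}\Gamma\!\left(\tfrac{s}{2}\right)^{r_1}\Gamma(s)^{r_2}$. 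Here one uses Legendre's duplication formula $\Gamma(s) = \pi^{-1/2}2^{s-1}\Gamma\!\left(\tfrac{s}{2}\right)\Gamma\!\left(\tfrac{s+1}{2}\right)$ on the $\Gamma(s)^{r_1}$ portion to produce $\Gamma\!\left(\tfrac{s}{2}\right)^{r_1}$, and the reflection formula $\Gamma\!\left(\tfrac{1-s}{2}\right)\Gamma\!\left(\tfrac{1+s}{2}\right) = \pi/\cos\tfrac{\pi s}{2}$ to cancel the $\cos$ factors against the $\Gamma\!\left(\tfrac{1-s}{2}\right)$ terms; similarly $\Gamma(s)\Gamma(1-s) = \pi/\sin\pi s$ together with $\sin\pi s = 2\sin\tfrac{\pi s}{2}\cos\tfrac{\pi s}{2}$ handles the $\Gamma(1-s)^{r_2}$ and $\sin$ factors, and one splits $\Gamma(s)^n = \Gamma(s)^{r_1}\Gamma(s)^{r_2}\Gamma(s)^{r_2}$ appropriately to feed both mechanisms. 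Tracking the powers of $2$ and $\pi$ through all of this is the bookkeeping-heavy part, but it is entirely mechanical once the three classical gamma identities are in place.

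Once $\xi_K(s) = \xi_K(1-s)$ is established, the final step is immediate: since $m$ is the order of vanishing of $\zeta_K$ — equivalently of $\xi_K$ — at $s=\tfrac12$, and the map $s\mapsto 1-s$ fixes $\tfrac12$, the factor $(s-\tfrac12)^{-m}$ satisfies $\left((1-s)-\tfrac12\right)^{-m} = \left(-(s-\tfrac12)\right)^{-m} = (-1)^m(s-\tfrac12)^{-m}$. To avoid the apparent sign $(-1)^m$, I would instead observe that $\mathscr{X}_K(s)$ is most cleanly written as $\left(\left(s-\tfrac12\right)^2\right)^{-m/2}\xi_K(s)$ only when $m$ is even; the cleaner route is to note that $\xi_K$ has a zero of order $m$ at $\tfrac12$, so $\xi_K(s)/(s-\tfrac12)^m$ is entire and even about $\tfrac12$ provided $\xi_K$ itself is, but the parity of $m$ must be controlled. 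In fact $\xi_K(s) = \xi_K(1-s)$ forces the Taylor expansion of $\xi_K$ about $s=\tfrac12$ to contain only even powers of $(s-\tfrac12)$, hence $m$ is even and $(-1)^m = 1$; therefore $\mathscr{X}_K(1-s) = (-(s-\tfrac12))^{-m}\xi_K(1-s) = (s-\tfrac12)^{-m}\xi_K(s) = \mathscr{X}_K(s)$, completing the proof. I expect the main obstacle to be purely organizational — keeping the exponents of $2$ and $\pi$ straight through the chain of gamma-function identities — rather than conceptual; the observation that $m$ is forced to be even is the one point that genuinely uses the structure of the problem and deserves an explicit sentence.
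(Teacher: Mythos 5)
Your proposal is correct and its essential content matches the paper's proof: the decisive observation in both is that $\xi_K(s)=\xi_K(1-s)$ forces the Taylor expansion of $\xi_K$ about $s=\tfrac12$ to contain only even powers of $s-\tfrac12$, so $m$ is even and the sign $(-1)^m$ disappears. The only difference is that you derive the symmetry $\xi_K(s)=\xi_K(1-s)$ from \eqref{DedekindFunctionalEquation} via the reflection and duplication formulae (a computation whose bookkeeping does check out), whereas the paper simply cites this as known.
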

\begin{proof}
    It is known that $\xi_K(s) = \xi_K(1-s)$. So, it suffices to show that $m$ is even. If $\zeta_K$ does not vanish at $s = 1/2$, then $m=0$. It is clear from \eqref{DedekindXi} that the multiplicity of the zero of $\xi_K$ at $s = 1/2$ is inherited from $\zeta_K$. So the order of the zero of $\xi_K(s)$ at $s= 1/2$ is $m$. Since $\xi_K(1/2 - s)$ is even and entire, there exists a sequence of complex numbers $\{a_i\}_{i=0}^\infty$ with
    $$\xi_K(s) = \sum_{k \geq 1} a_{2k} \left(s - \frac{1}{2}\right)^{2k}.$$
    Note that $a_0 = 0$ since $\xi_K$ has a zero at $s = 1/2$. The multiplicity of the zero at $s = 1/2$ is the first index $2j$ such that $a_{2j} \neq 0$. Thus, $m = 2j$ for some positive $j$. Since $m$ is even, we have that
    \begin{align*}
        \mathscr{X}_K(1-s) 
        &= \left((1-s) - \frac{1}{2}\right)^{-m} \xi_K(1-s)\\
        &= \left(\frac{1}{2} - s \right)^{-m}\xi_K(1-s)\\
        &= \left(s - \frac{1}{2}\right)^{-m} \xi_K(s)\\
        &= \mathscr{X}_K(s).
    \end{align*}
\end{proof}
\begin{lemma} \label{RealXiLemma}
    Let $\mathscr{X}_K^{(n)}(s)$ denote the $n$th derivative of $\mathscr{X}_K$. For all real $s$, $\mathscr{X}_K^{(n)}(s)$ is real.
\end{lemma}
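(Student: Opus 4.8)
The plan is to show that $\mathscr{X}_K$ takes real values on (part of) the real axis and then propagate this by the Schwarz reflection principle. First I would observe that $\zeta_K(s) \in \mathbb{R}$ for every real $s > 1$: this is immediate from the defining Dirichlet series $\zeta_K(s) = \sum_{\mathfrak{a}} \mathrm{N}(\mathfrak{a})^{-s}$, all of whose terms are positive reals for $s > 1$. Each of the remaining factors appearing in \eqref{DedekindXi} — namely $\tfrac{1}{2}s(s-1)$, $|\Delta_K|^{s/2}$, $\pi^{-ns/2}$, $2^{(1-s)r_2}$, $\Gamma\!\left(\tfrac{s}{2}\right)^{r_1}$, and $\Gamma(s)^{r_2}$ — is manifestly real for real $s > 1$. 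Hence $\xi_K(s) \in \mathbb{R}$ for $s > 1$, and since $\left(s - \tfrac{1}{2}\right)^{-m} \in \mathbb{R}$ there as well, we get $\mathscr{X}_K(s) \in \mathbb{R}$ for all real $s > 1$.

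Next, since $\mathscr{X}_K$ is entire (as noted in the preamble to this section) and real-valued on the interval $(1,\infty)$, the function $s \mapsto \overline{\mathscr{X}_K(\bar{s})}$ is also entire and agrees with $\mathscr{X}_K$ on a set having a limit point; by the identity theorem (equivalently, the Schwarz reflection principle), $\overline{\mathscr{X}_K(\bar{s})} = \mathscr{X}_K(s)$ for every $s \in \mathbb{C}$. Differentiating this identity $n$ times with respect to $s$ gives $\overline{\mathscr{X}_K^{(n)}(\bar{s})} = \mathscr{X}_K^{(n)}(s)$. Specializing to real $s$, so that $\bar{s} = s$, yields $\overline{\mathscr{X}_K^{(n)}(s)} = \mathscr{X}_K^{(n)}(s)$, i.e.\ $\mathscr{X}_K^{(n)}(s) \in \mathbb{R}$, as claimed.

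There is essentially no serious obstacle in this lemma; the only point that warrants a moment's care is that the factor $\left(s - \tfrac{1}{2}\right)^{-m}$ is singular at $s = \tfrac{1}{2}$, but this is harmless because $\mathscr{X}_K$ is entire and we only invoke reality on $(1,\infty)$ before extending it by analytic continuation. Alternatively, one could bypass reflection entirely by noting that the restriction of $\mathscr{X}_K$ to $\mathbb{R}$ is a real-analytic, real-valued function of a real variable — once reality is established on all of $\mathbb{R}$ rather than just on $(1,\infty)$ — so that every derivative of that restriction, which coincides with the complex derivative $\mathscr{X}_K^{(n)}$ on $\mathbb{R}$, is automatically real.
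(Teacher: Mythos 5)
Your proof is correct, and it rests on the same mechanism as the paper's: reality of the Dirichlet series on the real axis for $\Re(s)>1$, the identity theorem to propagate $\overline{f(\bar s)}=f(s)$ to all of $\mathbb{C}$, and differentiation of that identity. The one genuine difference is the choice of function to reflect. The paper introduces the auxiliary entire function $F(s)=(s-1)\left(s-\frac{1}{2}\right)^{-m}\zeta_K(s)$, proves $F^{(n)}$ is real on $\mathbb{R}$, and then has to pass from $F$ to $\mathscr{X}_K$ — which forces it to first exclude the non-positive integers (where the gamma factors have poles) and then patch those points back in by a continuity argument. You instead apply the reflection directly to the entire function $\mathscr{X}_K$, after checking that every factor of $\xi_K$ in \eqref{DedekindXi}, including the gamma factors, is real on $(1,\infty)$. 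This buys a cleaner argument: there is no auxiliary function, no case analysis at the poles of $\Gamma$, and no continuity step, since the identity $\overline{\mathscr{X}_K(\bar s)}=\mathscr{X}_K(s)$ is established once and for all on $\mathbb{C}$ and specializes immediately at every real point. Both routes are valid; yours is the tighter write-up of the same underlying idea.
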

\begin{proof}
    We first examine $\zeta_K$. Let $F(s) := (s-1)(s-1/2)^{-m}\zeta_K(s)$. We see that $F$ is entire. From the series representation of $\zeta_K$, it is easy to see that $\overline{F(s)} = F(\overline{s})$ on the half-plane $\Re(s) > 1$. Due to the Cauchy-Riemann equations, $F$ being holomorphic implies $\overline{F(\overline{s})}$ is holomorphic. Since $F(s)$ and $\overline{F(\overline{s})}$ are holomorphic and $F(s) = \overline{F(\overline{s})}$ on the open half-plane $\Re(s) > 1$, the identity theorem implies that $F(s) = \overline{F(\overline{s})}$ on the entire complex plane. One can see from the limit definition of the derivative and induction that $F^{(n)}(s) = \overline{F^{(n)}(\overline{s})}$ for each non-negative integer $n$. If we restrict $s$ to lie on the real number line, we get that $F^{(n)}(s) = \overline{F^{(n)}(s)}$. Thus, $F^{(n)}(s) \in \mathbb{R}$ for all $s \in \mathbb{R}$. So $\mathscr{X}^{(n)}_K(s) \in \mathbb{R}$ for all $s\in \mathbb{R} \setminus\{0, -1, -2,\ldots\}$ avoiding the poles from the gamma function. Because $\mathscr{X}_K$ is entire, $\mathscr{X}_K \in C^\infty(\mathbb{C})$. We immediately deduce $\mathscr{X}^{(n)}_K(s) \in \mathbb{R}$ for all $s \in \mathbb{R}$ due to continuity.
\end{proof}

The function $\xi_K$ is an entire function of order $1$ (see \cite{Barner}) and does not vanish at $s = 0$. This implies that $\mathscr{X}_K$ is also an entire function of order $1$ and does not vanish at $s=0$. Thus, we can apply Hadamard's factorization theorem to obtain
\begin{equation} \label{XiProduct}
    \mathscr{X}_K(s) = \mathscr{X}_K(0) e^{Bs}\prod_{\underset{\rho_K \neq \frac{1}{2}}{\rho_K}}\left(1 - \frac{s}{\rho_K}\right)e^{s/\rho_K}
\end{equation}
where $B$ is a constant and the product is taken over all the roots $\rho_K$ of $\xi_K$, the non-trivial zeros of $\zeta_K$ not equal to $1/2$, counting multiplicity. This product converges absolutely and uniformly on compact subsets of $U := \mathbb{C}\setminus\mathcal{Z}_K$ where $\mathcal{Z}_K$ denotes the set of non-trivial zeros of $\zeta_K$ not equal to $1/2$ (see \cite[p. 76]{Davenport2000}). The proof of the Hadamard product for the classical Riemann xi function can be found in \cite{Titchmarsh1986} and \cite{Edwards}.

\section{Main Result}
We are ready to prove Theorem \ref{MainResult}.
\begin{proof}
    From \eqref{XiProduct}, we can logarithmically differentiate to obtain
    $$\frac{\mathscr{X}_K'(s)}{\mathscr{X}_K(s)} = B + \sum_{\underset{\rho_K \neq \frac{1}{2}}{\rho_K}} \left(\frac{1}{s - \rho_K} + \frac{1}{\rho_K} \right).$$
    This series is guaranteed to converge uniformly on compact subsets of $U$. Since $\zeta_K$ is meromorphic, $\mathcal{Z}_K$ is closed and therefore $U$ is open. Thus, we can differentiate termwise again to show that
    $$\frac{\mathscr{X}_K''(s)\mathscr{X}_K(s) - \mathscr{X}_K'(s)^2}{\mathscr{X}_K(s)^2} = -\sum_{\underset{\rho_K \neq \frac{1}{2}}{\rho_K}} \frac{1}{(s - \rho_K)^2}.$$
    This series also converges uniformly on compact subsets of the open set $U$. By Lemma \ref{OmegaSymmetry}, $\mathscr{X}_K(s) = \mathscr{X}_K(1-s)$, which implies $\mathscr{X}_K'(1/2) = 0$. Thus,
    $$\frac{\mathscr{X}_K''\left(\frac{1}{2}\right)}{\mathscr{X}_K\left(\frac{1}{2}\right)} = -\sum_{\underset{\rho_K \neq \frac{1}{2}}{\rho_K}}\frac{1}{\left(\frac{1}{2} - \rho_K\right)^2}.$$
     We take the real part of both sides. By Lemma \ref{RealXiLemma}, $\mathscr{X}_K''(1/2)/\mathscr{X}_K(1/2)$ is real. Therefore,
    \begin{align*}
        \frac{\mathscr{X}_K''\left(\frac{1}{2}\right)}{\mathscr{X}_K\left(\frac{1}{2}\right)} 
        &= \Re\left(-\sum_{\underset{\rho_K \neq \frac{1}{2}}{\rho_K}}\frac{1}{\left(\frac{1}{2} - \rho_K\right)^2}\right)\\
        &= -\frac{1}{2}\sum_{\underset{\rho_K \neq \frac{1}{2}}{\rho_K}} \left( \frac{1}{\left(\frac{1}{2} - \rho_K\right)^2} + \frac{1}{\left(\frac{1}{2} - \overline{\rho_K}\right)^2}\right)\\
        &= -\frac{1}{2}\sum_{\underset{\rho_K \neq \frac{1}{2}}{\rho_K}} \frac{(1 - 2\Re(\rho_K))^2 - 2\left|\frac{1}{2} - \rho_K\right|^2}{\left|\rho_K - \frac{1}{2} \right|^4}.
    \end{align*}
    Assume ERH holds for $\zeta_K$. Then for all $\rho_K$, $\Re\left(\rho_K\right) = 1/2$. Thus,
    $$\frac{\mathscr{X}_K''\left(\frac{1}{2}\right)}{\mathscr{X}_K\left(\frac{1}{2}\right)} = -\frac{1}{2}\sum_{\underset{\rho_K \neq \frac{1}{2}}{\rho_K}} \frac{-2\left|\frac{1}{2} - \rho_K\right|^2}{\left|\frac{1}{2} - \rho_K\right|^4} 
    = \sum_{\underset{\rho_K \neq \frac{1}{2}}{\rho_K}} \frac{1}{\left|\frac{1}{2} - \rho_K\right|^2},$$
    as desired.

    Conversely, assume \eqref{MainResultEq} holds. Then
    $$\sum_{\underset{\rho_K \neq \frac{1}{2}}{\rho_K}} \frac{(1 - 2\Re(\rho_K))^2}{\left|\frac{1}{2} - \rho_K\right|^4} = 0.$$
    Since the summand is non-negative, we must have $$\frac{(1 - 2\Re(\rho_K))^2}{\left|\frac{1}{2} - \rho_K\right|^4} = 0$$
    for all non-trivial zeros $\rho_K$. This occurs if and only if $\Re(\rho_K) = 1/2$ for every non-trivial zero $\rho_K$. Thus, assuming \eqref{MainResultEq} holds implies ERH for $\zeta_K$.
\end{proof}
\begin{corollary}
    Suppose $K$ is a number field such that $\zeta_K$ does not vanish at $s = 1/2$. Then ERH for $\zeta_K$ is equivalent to
    $$\sum_{\rho_K} \frac{1}{\left|\frac{1}{2} - \rho_K\right|^2} = \frac{\xi_K''\left(\frac{1}{2}\right)}{\xi_K\left(\frac{1}{2}\right)} .$$
\end{corollary}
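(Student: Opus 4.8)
The plan is to deduce the corollary directly from Theorem \ref{MainResult} by specializing to the case $m = 0$; no new analytic input is needed. First I would check that the hypothesis of the corollary, namely $\zeta_K(1/2) \neq 0$, is precisely the standing hypothesis of Theorem \ref{MainResult}, so that theorem is applicable verbatim. Next, since $m$ is by definition the multiplicity of the zero of $\zeta_K$ at $s = \tfrac12$ and this multiplicity is $0$ under our assumption, the defining relation of $\mathscr{X}_K$ collapses to
$$\mathscr{X}_K(s) = \left(s - \tfrac12\right)^{-0}\xi_K(s) = \xi_K(s)$$
as entire functions. In particular $\mathscr{X}_K''(1/2)/\mathscr{X}_K(1/2) = \xi_K''(1/2)/\xi_K(1/2)$, which identifies the right-hand side of \eqref{MainResultEq} with the right-hand side of the corollary.

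Then I would address the index set of the series. When $\zeta_K(1/2) \neq 0$, no non-trivial zero of $\zeta_K$ equals $\tfrac12$, so the restriction $\rho_K \neq \tfrac12$ appearing under the summation sign in \eqref{MainResultEq} is vacuous, and the sum there runs over all non-trivial zeros $\rho_K$ of $\zeta_K$, counted with multiplicity. Hence the left-hand side of \eqref{MainResultEq} coincides with $\sum_{\rho_K} 1/\left|\tfrac12 - \rho_K\right|^2$. Substituting these two identifications into the equivalence asserted by Theorem \ref{MainResult} gives exactly the claimed equivalence between ERH for $\zeta_K$ and $\sum_{\rho_K} 1/\left|\tfrac12 - \rho_K\right|^2 = \xi_K''(1/2)/\xi_K(1/2)$.

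I expect no real obstacle here: the corollary is a clean specialization, and the only care required is bookkeeping — confirming that the condition ``$\rho_K \neq \tfrac12$'' is empty under the hypothesis, and that the Hadamard-product argument behind Theorem \ref{MainResult} already covers the degenerate value $m = 0$ (indeed Lemma \ref{OmegaSymmetry} explicitly allows $m = 0$), so the passage from $\mathscr{X}_K$ to $\xi_K$ introduces nothing new. Thus the proof amounts to a single sentence once these observations are recorded.
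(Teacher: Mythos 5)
Your proposal is correct and matches the paper's (implicit) intent exactly: the corollary is the $m=0$ specialization of Theorem \ref{MainResult}, where $\mathscr{X}_K = \xi_K$ and the restriction $\rho_K \neq \tfrac12$ under the sum is vacuous since $\zeta_K\left(\tfrac12\right) \neq 0$. The paper offers no separate proof, so your bookkeeping is precisely what was intended.
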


\begin{remark}
    If $K = \mathbb{Q}$, we immediately get Suman and Das' result in \cite{Suman}.
\end{remark}

\bibliographystyle{plain}
\bibliography{bibliography.bib}

\begin{thebibliography}{10}

\bibitem{Armitage}
J.V. Armitage.
\newblock Zeta functions with a zero at $s = \frac{1}{2}$.
\newblock {\em Invent. Math.}, 15:199--205, 1972.

\bibitem{Barner}
Klaus Barner.
\newblock {On A. Weil's explicit formula.}
\newblock {\em J. Reine Angew. Math.}, 323:139--152, 1981.

\bibitem{Broughan1}
Kevin Broughan.
\newblock {\em {Equivalents of the Riemann Hypothesis: Volume 1, Arithmetic Equivalents}}, volume 164.
\newblock Cambridge University Press, Cambridge, 2017.

\bibitem{Broughan2}
{Broughan, Kevin}.
\newblock {\em {Equivalents of the Riemann Hypothesis: Volume 2, Analytic Equivalents}}, volume 165.
\newblock Cambridge University Press, Cambridge, 2017.

\bibitem{Davenport2000}
Harold Davenport.
\newblock {\em Multiplicative Number Theory}.
\newblock Springer-Verlag, 3 edition, 2000.

\bibitem{Dixit}
Anup~B Dixit and M~Ram Murty.
\newblock {On Ihara’s conjectures for Euler--Kronecker constants}.
\newblock {\em Acta Arith.}, 210:95 -- 123, 2023.

\bibitem{Droll}
Andrew~D. Droll.
\newblock {\em {Variations of Li’s criterion for an extension of the Selberg class}}.
\newblock PhD thesis, Queen's University, Kingston, Ontario, 2012.

\bibitem{Edwards}
Harold~M. Edwards.
\newblock {\em {Riemann’s Zeta Function}}.
\newblock Academic Press, 1974.

\bibitem{GunMurtyRath}
Sanoli Gun, M~Ram Murty, and Purusottam Rath.
\newblock Transcendental sums related to the zeros of zeta functions.
\newblock {\em Mathematika}, 64(3):875--897, 2018.

\bibitem{Hu}
Pei-Chu Hu and Chung-Chun Yang.
\newblock {\em {Value Distribution Theory Related to Number Theory}}.
\newblock Birkh\"auser, Basel, 2006.

\bibitem{Hu2012544}
Pei-Chu Hu and Zhuan Ye.
\newblock {The Generalized Riemann Hypothesis and the discriminant of number fields}.
\newblock {\em J. Math. Anal. Appl.}, 391(2):544--551, 2012.

\bibitem{Kandhil}
Neelam Kandhil.
\newblock {A note on Dedekind zeta values at 1/2}.
\newblock {\em International Journal of Number Theory}, 18(06):1289--1299, 2022.

\bibitem{Neukirch}
J\"urgen Neukirch.
\newblock {\em {Algebraic Number Theory}}.
\newblock Springer, 1999.

\bibitem{Riemann1859}
Bernhard Riemann.
\newblock {{\"U}ber die Anzahl der Primzahlen unter einer gegebenen Gr{\"o}sse}.
\newblock {\em Monatsber. Akad. Berlin}, pages 671--680, 1859.

\bibitem{Suman}
Shekhar Suman and Raman~Kumar Das.
\newblock {A note on series equivalent of the Riemann hypothesis}.
\newblock {\em Indian J. Pure Appl. Math.}, 54:117--–119, 2023.

\bibitem{Titchmarsh1986}
E.~C. Titchmarsh.
\newblock {\em {The Theory of the Riemann Zeta-Function}}.
\newblock Oxford University Press, Oxford, 2nd edition, 1986.

\end{thebibliography}

\end{document}